\documentclass[12pt]{article}
\usepackage{amsmath,amssymb,amsfonts,amsthm}

\setlength{\parindent}{18pt}
\textwidth15.8 cm
\textheight22.9cm
\usepackage{multirow}
\topmargin- 0.5cm
\hoffset -1.3 cm

\newcounter{item}[section]
\newcounter{kirshr}
\newcounter{kirsha}
\newcounter{kirshb}
\newenvironment{mysect}[1]{\vskip8pt\par\noindent\setcounter{item}{1}
\setcounter{equation}{0}{\large\bf\arabic{section}.  #1 }\vskip8pt\nopagebreak\par\nopagebreak }
{\stepcounter{section}\upshape\par}

\newtheorem{theorem}{Theorem}[section]

\newtheorem{corollary}[theorem]{Corollary}
\newtheorem{proposition}[theorem]{Proposition}


\renewcommand{\b}{\nabla}


\newcommand{\oversetc}[1]{\overset{\text{\tiny}{\text{\tiny}\circ}}#1}
\newcommand\undersym[2]{\raisebox{-6pt}{\tiny$#2$}{\kern-5pt}\mbox{$#1$}}
\newcommand\overcirc[1]{\raisebox{10pt}{\tiny$\circ$}{\kern-7pt}\mbox{$#1$}}

\newtheorem{remark}[theorem]{Remark}

\newtheorem{definition}[theorem]{Definition}
\begin{document}
\title{ \textbf{Connections in sub-Riemannian geometry of parallelizable distributions}\footnote{arXiv number: 1603.06106 [math.DG]} }
\author{Nabil L. Youssef and Ebtsam H. Taha}
\date{}

\maketitle                     
\vspace{-1.2cm}
\begin{center}
{Department of Mathematics, Faculty of Science,\\ Cairo University, Giza, Egypt} \end{center}
\vspace{-0.6cm}
\begin{center}
nlyoussef@sci.cu.edu.eg,\, nlyoussef2003@yahoo.fr \\
ebtsam.taha@sci.cu.edu.eg,\, ebtsam.h.taha@hotmail.com
\end{center}
\vspace{0.1cm}
\maketitle
\stepcounter{section}
\begin{flushright}
\emph{ Dedicated to the meomery of Waleed A. Elsayed}\\
\end{flushright}
\vspace{0.3cm}

\noindent{\bf Abstract.} The notion of a parallelizable distribution has been introduced and investigated. A non-integrable parallelizable distribution carries a natural sub-Riemannian structure. The geometry of this structure has been studied from the bi-viewpoint of absolute parallelism geometry and sub-Riemannian geometry. Two remarkable linear connections have been constructed on a sub-Riemannian parallelizable distribution, namely, the Weitzenb\"ock connection and the sub-Riemannian connection. The obtained results have been applied to two concrete examples: the spheres $S^3$ and $S^7$.

\medskip
\noindent{\bf Keywords:} bracket generating distribution, parallelizable distribution, sub-Riemannian structure, Weitzenb\"ock connection, sub-Riemannian connection, spheres $S^3$ and $S^7$.

\medskip
\noindent{\bf MSC 2010:} 53C17, 58A30, 53C05
\vspace{0.3cm}


\begin{mysect}{Introduction}
Sub-Riemannian Geometry \cite{Agrachev, {Calin}, {Donne}, {Strichartz}, {Strichartz C}} has many applications such as diffusion, mechanics, gauge theories and control theory \cite{Frédéric, Rifford}. Absolute parallelism geometry or the geometry of parallelizable manifolds \cite{Brickell, Wanas, Waleed, AMR} is frequently used for applications in physics, especially in the geometrization of physical theories such general relativity and gravitation \cite{WN, NW, Shirafuji, charge, classical}.

Several attempts \cite{Bejancu, Cole, Robert, Hladky & Pauls} have been made to construct a connection theory in sub-Riemannian geometry. Our approach is different. We define a parallelizable distribution (PD) on a finite dimensional manifold $M$. A non-integrable PD on $M$ carries simultaneously two structures: an absolute parallelism structure and a sub-Riemannian structure. We make use of both structures to build up a connection theory on PD's. Two remarkable connections have been constructed on a sub-Riemannian parallelizable distribution, namely, the Weitzenb\"ock connection and the sub-Riemannian connection. The obtained results have been applied to the spheres $S^3$ and $S^7$.

 The wide spectrum of applications of both sub-Riemannian geometry and absolute parallelism geometry makes our approach, which enjoys the advantages of both geometries, a potential candidate for more applications in different fields.

\end{mysect}
\vspace{0.5cm}

\begin{mysect}{Parallelizable distribution and Sub-Riemannian structure}

We first give some fundamental definitions concerning sub-Riemannian geometry. For a more detailed exposition of Sub-Riemannian geometry, we refer to \cite{Agrachev, {Calin}, {Donne}, {Strichartz}, {Strichartz C}}. By a manifold $M$, we mean an n-dimensional smooth manifold.

\begin{definition}
A distribution of rank $k$ on a manifold $M$ is a map $D$ that assigns to each point $p\in M$ a $k$-dimensional subspace $D_{p}$ of $T_pM$. A distribution $D$ may be regarded as a vector sub-bundle\, $(\bigcup_{p\in M}D_p)\longrightarrow M$\, of the tangent bundle $TM\longrightarrow M$.
A Distribution D of rank $k$ is differentiable if every point $p\in M$ has a neighborhood $U$ and smooth $k$-vector fields $Y_1,\cdots Y_k$ on $U$ such that $Y_1(q),\cdots Y_k(q)$ form a basis of $D_{q}$ for all $q \in U$, i.e., $ D=\text{Span}\{Y_{1}, \cdots Y_{k}\}$ on $U$.
\end{definition}
We shall always deal with differentiable distributions.

\begin{definition}
A Distribution $D$ on $M$ is integrable if it admits a unique maximal integral manifold through each point of $M$.
A Distribution $D$ is involutive if $\,[X,\,Y] \in D\,\, \text{for all } \, X,Y  \in\,D$.
\end{definition}
According to Frobenius theorem, a distribution $D$ is integrable if and only if it is involutive.

\begin{definition}
A distribution $D$ on $M$ is bracket generating if there exists an integer $r\geq1$ such that $D^r_p = T_{p}M$ for all $p\in M$, where
\begin{eqnarray*}
  D^1_p       &=& D_p, \\
  D^{s+1}_{p} &=&  D^{s}_{p}+[D_{p},D^{s}_{p}\,],\, \text{for} \,\,\,s\ge1,
\end{eqnarray*}
and  $[D_{p},D^{s}_{p}]= \{[X,Y]: X \in D_{p},Y \in D^s_{p} \}$.\\
The smallest integer $r$ such that $D^r_p = T_{p}M$ is said to be the step of the distribution $D$. If $r$ does not exist, we say that the distribution is of infinite step.
\end{definition}

\begin{definition}
A sub-Riemannian metric on a distribution $D$ is a map $g$ that assigns to each point $p\in M$ a positive definite inner product  $g_{p}:D_{p}\times D_{p}\longrightarrow \mathbb{R}$ and $g_p$ varies smoothly.
\end{definition}

\begin{definition}
A sub-Riemannian structure on $M$ is a pair $(D, g)$ where $D$ is a non-integrable (bracket generating) distribution on $M$ and $g$ is a smooth sub-Riemannian metric on $D$. In this case, $M$ is said to be a sub-Riemannian manifold.
\end{definition}

In the following we shall deal with a certain type of distributions, namely, parallelizable distributions.
 Throughout the paper, $M$ will denote an $n$-dimensional smooth manifold,\, $C^{\infty}(M)$ the algebra of smooth functions on $M$,  $\mathfrak{X}(M) \text{ the } C^{\infty}(M)$-module of smooth vector fields on $M$ and $\Gamma(D)$ the $C^{\infty}(M)$-module of smooth sections of a distribution $D$. Einstein summation convention will be applied to all pairs of repeated indices even when they are both down. We shall follow the notations and use the results of \cite{Waleed}.

\begin{definition}
A distribution $HM$ on $M$ of rank $k<n$ is said to be parallelizable if it admits $k$ independent global smooth sections $\,\undersym{X}{1}, \cdots, \undersym{X}{k}\in \Gamma(HM)$, called paralellization sections. Consequently,  $HM=\text{span}\{\,\undersym{X}{i}\,|\,i = 1,\cdots,k\}$.
A parallelizable distribution generated by $\undersym{X}{i}$ will be denoted by $(M,\, HM,\,\undersym{X}{i})$
or, simply, by $(HM,\, \undersym{X}{i})$.
\end{definition}

\begin{proposition}
Every parallelizable distribution admits a sub-Riemannian metric. \end{proposition}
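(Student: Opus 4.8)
The plan is to construct an explicit sub-Riemannian metric on a parallelizable distribution by declaring the parallelization sections to be orthonormal. Since $(HM,\,\undersym{X}{i})$ is parallelizable, by definition there exist $k$ independent global smooth sections $\undersym{X}{1},\dots,\undersym{X}{k}$ that span $HM_p$ at each point $p$. The natural idea is to define, for each $p\in M$, an inner product $g_p$ on $HM_p$ by specifying its values on this global frame, namely $g_p(\undersym{X}{i}(p),\undersym{X}{j}(p))=\delta_{ij}$, and extending by bilinearity. This is well defined because the $\undersym{X}{i}(p)$ form a basis of $HM_p$, so every pair of vectors in $HM_p$ has a unique expression in this basis.

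First I would verify that $g_p$ so defined is a genuine positive definite inner product on each fiber. Symmetry and bilinearity are immediate from the definition via the Kronecker symbol. For positive definiteness, a vector $v\in HM_p$ can be written uniquely as $v=v^i\,\undersym{X}{i}(p)$, and then $g_p(v,v)=\sum_i (v^i)^2\geq 0$, with equality if and only if all components $v^i$ vanish, i.e. $v=0$. This confirms that $g_p$ is positive definite, matching the requirement in Definition of a sub-Riemannian metric.

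Next I would check the smoothness requirement, that $g_p$ varies smoothly with $p$. For arbitrary smooth sections $X,Y\in\Gamma(HM)$, we may write $X=X^i\,\undersym{X}{i}$ and $Y=Y^j\,\undersym{X}{j}$, where the component functions $X^i,Y^j\in C^{\infty}(M)$ are smooth precisely because the global frame is smooth and independent (so the dual coframe is smooth). Then $g(X,Y)=\sum_i X^i Y^i$ is a smooth function on $M$, which is exactly the statement that $g$ is smooth as a section of the appropriate bundle.

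The construction is essentially forced once one decides to use the global frame, so there is no deep obstacle; the only point requiring a little care is the smoothness of the component functions $X^i$, which rests on the global independence of the $\undersym{X}{i}$ guaranteeing a smooth dual coframe $\undersym{\omega}{i}$ with $X^i=\undersym{\omega}{i}(X)$. Note that positivity could equally be phrased through any smooth positive-definite symmetric matrix $(g_{ij})$ of functions with $g_p(\undersym{X}{i},\undersym{X}{j})=g_{ij}(p)$, but the choice $g_{ij}=\delta_{ij}$ is the simplest and suffices to prove existence, which is all the proposition asserts.
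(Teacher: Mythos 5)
Your proof is correct and follows essentially the same route as the paper: the paper defines the dual parallelization forms $\;\undersym{\Omega}{i}$ and sets $g=\;\undersym{\Omega}{i}\otimes\;\undersym{\Omega}{i}$, which is exactly your prescription $g(X,Y)=\sum_i X^iY^i$ making the frame orthonormal. You merely spell out the positive-definiteness and smoothness checks that the paper leaves as ``easy to show.''
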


\begin{proof}
Let $(HM,\;\undersym{X}{i})$ be a parallelizable distribution of rank $k<n$ on $M$. Define the $k$ differential $1$-forms $\;\undersym{\Omega}{i}:\Gamma(HM)\longrightarrow C^{\infty}(M)$ by
\begin{equation*} \label{1form}
\;\undersym{\Omega}{i}(\;\undersym{X}{j})=\delta_{ij}
\end{equation*}
We call $\;\undersym{\Omega}{i}$ the parallelization forms.
Clearly, if $ Y=Y^{i}\;\undersym{X}{i} \in \Gamma(HM)$, then
\begin{equation} \label{base}
\undersym{\Omega}{i}(Y)=Y^{i},\qquad\;\undersym{\Omega}{i}(Y)\;\undersym{X}{i}=Y.
\end{equation}
The parallelization forms $\;\undersym{\Omega}{i}$ are independent in the $C^{\infty}(M)\text{-module } \Gamma^*(HM)$.
It is then easy to show that
\begin{equation} \label{metric}
g:=\;\undersym{\Omega}{i}\otimes\;\undersym{\Omega}{i}
\end{equation}
defines a sub-Riemannian metric on $HM$.
\end{proof}

\begin{corollary}
A non-integrable parallelizable distribution $(HM,\;\undersym{X}{i})$  defines a sub-Riemannian structure on $M$ with sub-Riemannian metric $g$ (defined by $(\ref{metric}))$.
\end{corollary}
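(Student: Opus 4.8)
The plan is to observe that this corollary is an immediate consequence of the preceding Proposition together with the Definition of a sub-Riemannian structure, so that essentially no fresh computation is required. First I would recall that, by that Definition, a sub-Riemannian structure on $M$ is precisely a pair $(D,g)$ in which $D$ is a non-integrable (bracket generating) distribution on $M$ and $g$ is a smooth sub-Riemannian metric on $D$. The task therefore reduces to checking that the pair $(HM,g)$, with $g$ the tensor field defined in $(\ref{metric})$, satisfies both clauses of this Definition.

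For the distribution clause, $HM$ is non-integrable by the very hypothesis of the corollary, which is exactly the requirement imposed on $D$ in the Definition. For the metric clause, the Proposition has already established that $g:=\undersym{\Omega}{i}\otimes\undersym{\Omega}{i}$ is a smooth sub-Riemannian metric on $HM$; in particular each $g_p$ is a positive definite inner product on $HM_p$ varying smoothly with $p$. Since both clauses hold, the pair $(HM,g)$ is a sub-Riemannian structure on $M$ in the sense of the Definition, and consequently $M$ is a sub-Riemannian manifold.

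The only point that deserves a word of care is the interplay between the terms \emph{non-integrable} and \emph{bracket generating}, which the Definition writes as synonymous. I would simply adopt this convention of the paper and note that the non-integrability hypothesis supplies precisely the distribution-theoretic condition demanded of a sub-Riemannian structure; no separate verification that the iterated brackets eventually span $TM$ is needed beyond what the Definition already presupposes. With this understood there is no genuine obstacle in the argument: the result follows by reading off the two clauses of the Definition from the hypothesis and from the Proposition respectively.
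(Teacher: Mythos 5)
Your proposal is correct and matches the paper's treatment: the paper states this corollary without proof, treating it as an immediate consequence of the preceding Proposition (which supplies the metric $g$) together with the definition of a sub-Riemannian structure, exactly as you argue. Your remark about the paper's convention of identifying ``non-integrable'' with ``bracket generating'' is a fair observation, and adopting that convention is the right reading here.
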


It should be noted that the parallelization sections $\;\undersym{X}{i}$ are $g$-orthonormal:
  $g(\;\undersym{X}{i},\,\;\undersym{X}{j})=\delta_{ij}$.
  Moreover, we have $g(\;\undersym{X}{i},\, Y)=\;\undersym{\Omega}{i}(Y)$ for all $Y\in \Gamma(HM)$.\\

According to \cite{Strichartz, Strichartz C}, there exists a metric extension $G$ for $g$ that makes the split $TM = HM \oplus VM\, \,\,G$-orthogonal, where $VM:=(HM)^\perp$.
This decomposition of $TM$ induces two projectors $h:TM\rightarrow HM$ and $v:TM\rightarrow VM$, called horizontal and vertical projectors, respectively. The projectors $h \text{ and } v \text{ are } C^{\infty}(M)$-linear with the properties  $h^2 =h ,\,\, v^2=v, \,\,\,h\circ v= v \circ h=0 \text{ and } h+v=id_{TM}$.
\end{mysect}
\vspace{0.5cm}


\begin{mysect}{Linear connections on parallelizable distributions}
In this section, we explore the natural sub-Riemannian structure associated with a parallelizable distribution $(HM,\;\undersym{X}{i})$. We introduce and investigate two remarkable connections on $HM$, namely, the Weitzenb\"{o}ck connection and the sub-Riemannian connection. The last one generalizes the Levi-Civita connection to the sub-Riemannian case. We shall continue to follow the notations and use the results of \cite{Waleed}.

\begin{theorem}
Let $(HM,\;\undersym{X}{i})$ be a parallelizable distribution of rank $k$ on $M$. Then, there exists a unique linear connection $\nabla$ on $HM$ for which the parallelization sections $\;\undersym{X}{i}$ are parallel:
\begin{equation}\label{AP-condition}
\nabla_{Y}\;\undersym{X}{i}=0 \,\,\,\,\,\, \forall \,Y \in\mathfrak{X}(M).
\end{equation}
\end{theorem}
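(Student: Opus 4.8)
The plan is to exploit the fact that the parallelization sections $\undersym{X}{i}$ form a global frame for $HM$, so that every section $s\in\Gamma(HM)$ admits the unique global expansion $s=s^{i}\,\undersym{X}{i}$ with component functions $s^{i}=\undersym{\Omega}{i}(s)\in C^{\infty}(M)$ obtained from the parallelization forms and relation (\ref{base}). Recall that a linear connection on the vector bundle $HM$ is a map $\nabla:\mathfrak{X}(M)\times\Gamma(HM)\longrightarrow\Gamma(HM)$ that is $C^{\infty}(M)$-linear and additive in the first argument, additive in the second, and satisfies the Leibniz rule $\nabla_{Y}(fs)=(Yf)\,s+f\,\nabla_{Y}s$. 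The whole argument rests on the global availability of this frame.

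First I would settle uniqueness, since the argument simultaneously exposes the only possible formula. Assuming $\nabla$ is any connection satisfying the parallelism condition $\nabla_{Y}\undersym{X}{i}=0$, the Leibniz rule applied to $s=s^{i}\,\undersym{X}{i}$ forces
\begin{equation*}
\nabla_{Y}s=(Ys^{i})\,\undersym{X}{i}+s^{i}\,\nabla_{Y}\undersym{X}{i}=(Ys^{i})\,\undersym{X}{i}.
\end{equation*}
Hence $\nabla$ is completely determined by the frame, which establishes uniqueness.

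For existence I would then take the right-hand side above as a \emph{definition}, setting $\nabla_{Y}s:=\bigl(Y\,\undersym{\Omega}{i}(s)\bigr)\,\undersym{X}{i}$, and verify that it is a genuine linear connection. The three verifications are routine and mutually independent: $C^{\infty}(M)$-linearity in $Y$ follows from $(fY)\,\undersym{\Omega}{i}(s)=f\,(Y\,\undersym{\Omega}{i}(s))$; additivity in $s$ follows from additivity of the forms $\undersym{\Omega}{i}$; and the Leibniz rule in $s$ follows from $Y\bigl(f\,\undersym{\Omega}{i}(s)\bigr)=(Yf)\,\undersym{\Omega}{i}(s)+f\,\bigl(Y\,\undersym{\Omega}{i}(s)\bigr)$. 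Finally, evaluating the definition on $s=\undersym{X}{j}$ and using $\undersym{\Omega}{i}(\undersym{X}{j})=\delta_{ij}$, a constant, gives $\nabla_{Y}\undersym{X}{j}=(Y\delta_{ij})\,\undersym{X}{i}=0$, so the parallelism condition is met.

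I do not expect a genuine obstacle here: the content is purely formal once the global frame is fixed. The only point deserving care is that $\nabla$ is a connection on the bundle $HM$ rather than on $M$, so that the direction $Y$ ranges over all of $\mathfrak{X}(M)$ while the output must land in $\Gamma(HM)$. The proposed formula respects this automatically, since $Y\,\undersym{\Omega}{i}(s)$ is a well-defined function for every $Y\in\mathfrak{X}(M)$ and the result is manifestly a $C^{\infty}(M)$-combination of the $\undersym{X}{i}$. It is also worth stressing that the construction is tied to the chosen parallelization, which is precisely what characterizes this $\nabla$ as the Weitzenb\"ock (absolute parallelism) connection.
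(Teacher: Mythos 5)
Your proposal is correct and follows essentially the same route as the paper: uniqueness via the Leibniz rule applied to the expansion $Z=\undersym{\Omega}{i}(Z)\,\undersym{X}{i}$, which forces the formula $\nabla_{Y}Z=\bigl(Y\cdot\undersym{\Omega}{i}(Z)\bigr)\undersym{X}{i}$, and existence by adopting that formula as the definition and checking the connection axioms. Your write-up merely spells out the routine verifications that the paper leaves to the reader.
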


\begin{proof}
To prove the uniqueness assume that $\nabla$ is a linear connection satisfying the condition $\nabla\;\undersym{X}{i}=0$. For all $Y\in\mathfrak{X}(M),\, Z\in\Gamma(HM)$ we have, by (\ref{base}) and (\ref{AP-condition}),
$$\nabla_YZ=\nabla_Y\big(\;\undersym{\Omega}{i}(Z)\;\undersym{X}{i}\big)=\;\undersym{\Omega}{i}(Z)\nabla_Y\;\undersym{X}{i}
+\big(Y\cdot\;\undersym{\Omega}{i}(Z)\big)\;\undersym{X}{i} =\big(Y\cdot\;\undersym{\Omega}{i}(Z)\big) \;\undersym{X}{i}.$$
Hence, the connection $\nabla$ is uniquely determined by the relation
\begin{equation} \label{canonical}
\nabla_YZ=\big(Y\cdot\;\undersym{\Omega}{i}(Z)\big)\;\undersym{X}{i}.
\end{equation}

To prove the existence, let $\nabla:\mathfrak{X}(M)\times\Gamma(HM)\longrightarrow\Gamma(HM)$ be defined by (\ref{canonical}).
It is easy to show that $\nabla$ is a linear connection on $HM$ with the required property.
\end{proof}
The unique linear connection $\nabla$ on $HM$ defined by (\ref{canonical}) will be called the Weitzenb\"{o}ck connection of $(HM,\;\undersym{X}{i})$.
\begin{corollary}
The Weitzenb\"ock connection $\nabla \text{ of } (HM,\;\undersym{X}{i}) \text{ is metric: } \nabla g=0$.
\end{corollary}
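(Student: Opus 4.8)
The plan is to exploit the two defining features of the Weitzenb\"ock connection simultaneously: that it annihilates the parallelization sections, $\nabla_Y\undersym{X}{i}=0$ by (\ref{AP-condition}), and that these same sections are $g$-orthonormal, $g(\undersym{X}{i},\undersym{X}{j})=\delta_{ij}$. Since the metricity statement $\nabla g=0$ is an identity between tensors, it suffices to verify it on the global frame $\{\undersym{X}{1},\dots,\undersym{X}{k}\}$ of $\Gamma(HM)$. Concretely, I would first record that $\nabla g$ acts as
\begin{equation*}
(\nabla_Y g)(Z,W)=Y\cdot g(Z,W)-g(\nabla_Y Z,W)-g(Z,\nabla_Y W),\qquad Y\in\mathfrak{X}(M),\ Z,W\in\Gamma(HM),
\end{equation*}
and check that the right-hand side is $C^{\infty}(M)$-linear in each of $Z$ and $W$, so that knowing it on the frame pairs $(\undersym{X}{i},\undersym{X}{j})$ determines it everywhere via the expansion $Z=\undersym{\Omega}{i}(Z)\undersym{X}{i}$ from (\ref{base}).

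Next I would evaluate the displayed formula on $(\undersym{X}{i},\undersym{X}{j})$. The two coupling terms $g(\nabla_Y\undersym{X}{i},\undersym{X}{j})$ and $g(\undersym{X}{i},\nabla_Y\undersym{X}{j})$ vanish immediately because $\nabla_Y\undersym{X}{i}=\nabla_Y\undersym{X}{j}=0$. The remaining term is $Y\cdot g(\undersym{X}{i},\undersym{X}{j})=Y\cdot\delta_{ij}$, and since $\delta_{ij}$ is a constant function on $M$ its derivative along any $Y$ is zero. Hence
\begin{equation*}
(\nabla_Y g)(\undersym{X}{i},\undersym{X}{j})=0\qquad\text{for all }i,j\text{ and all }Y\in\mathfrak{X}(M),
\end{equation*}
and by tensoriality $\nabla g=0$.

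There is no serious obstacle here; the whole argument is essentially a one-line consequence of the compatibility of parallelism with orthonormality. The only point that deserves a line of justification is the tensorial character of $\nabla g$ in its section arguments, i.e.\ that evaluating on the global orthonormal frame $\undersym{X}{i}$ genuinely suffices. This follows from the Leibniz structure of the displayed formula together with (\ref{base}) and the definition (\ref{metric}) of $g$, so that the reduction to the constant matrix $\delta_{ij}$ is the entire content of the proof.
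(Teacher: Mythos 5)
Your argument is correct and is exactly the (omitted) reasoning the paper intends: the corollary follows immediately from the theorem because the parallelization sections are both $\nabla$-parallel and $g$-orthonormal, so $(\nabla_Y g)(\undersym{X}{i},\undersym{X}{j})=Y\cdot\delta_{ij}=0$, and tensoriality of $\nabla g$ in its section arguments does the rest. Nothing is missing; your extra care in checking $C^{\infty}(M)$-linearity is the only step the paper leaves entirely implicit.
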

\begin{definition}
Let $(HM,\;\undersym{X}{i})$ be a parallelizable distribution. Let $\nabla$ be the Weitzenb\"ock connection on $(HM,\;\undersym{X}{i})$.
\begin{description}
   \item[(a)] The torsion tensor of the Weitzenb\"ock connection is defined by:
   $$T:\mathfrak{X}(M)\times\mathfrak{X}(M)\longrightarrow\Gamma(HM);$$
  \begin{equation*}\label{torsin}T(Y,\,Z)=\nabla_Y{hZ}-\nabla_Z{hY}-h[Y,\,Z].\end{equation*}
  \item[(b)] The curvature tensor of the Weitzenb\"ock connection is defined by: $$R:\mathfrak{X}(M)\times \mathfrak{X}(M)\times \Gamma(HM)\longrightarrow\Gamma(HM);$$
  \begin{equation*} \label{curvature}R(Y,\,Z)W:=\nabla_Y \nabla_ZW - \nabla_Z \nabla_YW - \nabla_{[Y,\,Z]}W.
  \end{equation*}
\end{description}
The torsion and curvature tensors of an arbitrary linear connection on $(HM,\;\undersym{X}{i})$ can be defined similarly.
\end{definition}

The torsion tensor $T$ of $\nabla$ has the properties:
$$T(\;\undersym{X}{i},\,\;\undersym{X}{j})= -h[\;\undersym{X}{i},\,\;\undersym{X}{j}],\,\,\,\,\,\,\, T(vY,\;\undersym{X}{j})=-h[vY,\;\undersym{X}{j}],\,\,\,\,\,\,\,T(vY,\,vZ)=-h[vY,\,vZ].$$
Because of the property (\ref{AP-condition}), the curvature tensor of the Weitzenb\"ock connection vanishes identically.

\begin{definition}
Given a vector field $\,W \in \mathfrak{X}(M),$ the horizontal Lie derivative with respect to $W$ of the metric tensor $g$ is defined, for all $\,Y,\,Z \in \Gamma(HM)$, by:
$$(\mathfrak{L}_{W} g)(Y,Z):=W.g(Y,Z)+  g(h[Y,\,W] ,Z) + g(h[Z,\,W] ,Y).$$

\end{definition}
It is clear that
$$(\mathfrak{L}_{fW} g)(Y,Z)=f(\mathfrak{L}_{W} g)(Y,Z)+(Y.f)\,g(hW,Z)+(Z.f)\,g(hW,Y).$$
  \begin{theorem}
On any parallelizable distribution $(M,\,HM,\undersym{X}{k})$, there exists a unique linear connection $\oversetc{\b}$, called the sub-Riemannian connection (sR-connection),  such that
 \begin{description}
   \item[(a)] $\oversetc{\b}$ is metric: $(\oversetc{\b}_{W} g)(Y,\,Z)=0\,\, \forall \, Y,\,Z \in \Gamma(HM),\,W \in \mathfrak{X}(M).$
   \item[(b)] $\oversetc{T}(HM,\,HM)=0.$
   \item[(c)] $g(\oversetc{T}(V,\,Y), Z)=g(\oversetc{T}(V,\,Z), Y),\,\, \forall \, Y,\,Z \in \Gamma(HM),\,V \in \Gamma(VM).$
 \end{description}
 \end{theorem}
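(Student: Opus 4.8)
The plan is to imitate the classical derivation of the Levi-Civita connection through a Koszul-type formula, but to split the direction argument into its horizontal and vertical parts and treat them separately. Since $\oversetc{\b}_{W}Y$ is required to lie in $\Gamma(HM)$ and $g$ is positive definite on $HM$, the connection is completely determined once $g(\oversetc{\b}_{W}Y,Z)$ is known for every $Z\in\Gamma(HM)$. Moreover, as the direction argument of any linear connection is $C^{\infty}(M)$-linear, it suffices to compute $\oversetc{\b}_{W}Y$ separately for $W=hW\in\Gamma(HM)$ and $W=vW\in\Gamma(VM)$ and then add the results. Throughout I write $D:=\oversetc{\b}$ for brevity, and expand condition (a) as $W\cdot g(Y,Z)=g(D_{W}Y,Z)+g(Y,D_{W}Z)$.

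For uniqueness in the horizontal direction I would take $X,Y,Z\in\Gamma(HM)$ and form the cyclic combination of the metric identity under the permutations $(X,Y,Z)\to(Y,Z,X)\to(Z,X,Y)$, with signs $+,+,-$. Substituting the torsion-free relation $D_{X}Y-D_{Y}X=h[X,Y]$ furnished by (b) collapses all the mixed terms and leaves
\begin{align*}
2\,g(D_{X}Y,Z) &= X g(Y,Z)+Y g(Z,X)-Z g(X,Y) \\
&\quad + g(h[X,Y],Z)-g(h[X,Z],Y)-g(h[Y,Z],X),
\end{align*}
which pins down $D_{X}Y$. For the vertical direction I would take $V\in\Gamma(VM)$ and $Y,Z\in\Gamma(HM)$. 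Condition (a) supplies the symmetric part $g(D_{V}Y,Z)+g(D_{V}Z,Y)=V g(Y,Z)$, while condition (c), after inserting $\oversetc{T}(V,Y)=D_{V}Y-h[V,Y]$ (using $hV=0$, $hY=Y$), supplies the antisymmetric part $g(D_{V}Y,Z)-g(D_{V}Z,Y)=g(h[V,Y],Z)-g(h[V,Z],Y)$. Adding these two relations isolates
\begin{equation*}
2\,g(D_{V}Y,Z)=V g(Y,Z)+g(h[V,Y],Z)-g(h[V,Z],Y),
\end{equation*}
so $D_{V}Y$ is uniquely determined as well. Combining the two formulas via $D_{W}Y=D_{hW}Y+D_{vW}Y$ gives uniqueness.

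For existence I would turn the argument around: take the two displayed identities (assembled through $W=hW+vW$) as the \emph{definition} of the functional $Z\mapsto g(\oversetc{\b}_{W}Y,Z)$ on $\Gamma(HM)$, and use the positive-definiteness of $g$ on $HM$ to extract a genuine section $\oversetc{\b}_{W}Y\in\Gamma(HM)$. Properties (a)--(c) then follow almost immediately by reversing the computations above. What remains, and what carries the real content, is to confirm that this recipe defines an honest linear connection, i.e. that it is $C^{\infty}(M)$-linear in $W$ and obeys the Leibniz rule $\oversetc{\b}_{W}(fY)=(Wf)\,Y+f\,\oversetc{\b}_{W}Y$ in $Y$.

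I expect this last verification to be the main obstacle, precisely because the formulas couple the non-tensorial directional derivatives $W\cdot g(Y,Z)$ with the non-tensorial bracket terms $g(h[\cdot,\cdot],\cdot)$, and one must check that the offending $(\cdot f)$-contributions cancel. The cancellations hinge on two structural facts: that $h$ is $C^{\infty}(M)$-linear with $hV=0$ on vertical fields and $hY=Y$ on horizontal ones, and that $g$ is symmetric, so that spurious terms like $(Yf)\,g(Z,X)-(Yf)\,g(X,Z)$ simply vanish. The vertical formula, which has no Levi-Civita analogue and is the genuinely sub-Riemannian ingredient, is where I would be most careful: there the Leibniz term emerges from applying the product rule to both $V\cdot g(fY,Z)$ and $h[V,fY]=f\,h[V,Y]+(Vf)\,Y$, and the two resulting $(Vf)\,g(Y,Z)$ contributions must recombine to yield exactly $(Vf)\,Y$ after division by $2$.
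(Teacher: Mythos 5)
Your argument is correct, but it takes a genuinely different route from the paper's. The paper proceeds by \emph{exhibiting} an explicit closed formula for the connection built from the Weitzenb\"ock connection $\nabla$ and the horizontal Lie derivative, namely $\oversetc{\nabla}_WZ=\widehat{\nabla}_WZ-\frac{1}{2}(\mathfrak{L}_{\undersym{X}{i}}g)(hW,Z)\,\undersym{X}{i}-\frac{1}{2}g(Z,h[vW,\undersym{X}{k}])\,\undersym{X}{k}$, verifies (a)--(c) on this formula, and then proves uniqueness by the six-term difference-tensor argument ($\psi:=\oversetc{\nabla}-\overline{\nabla}$ is forced to satisfy $g(\psi(\cdot,Z),Y)=-g(\psi(\cdot,Y),Z)$ together with enough symmetry from (b) and (c) to vanish). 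You instead run the classical Koszul scheme split along $W=hW+vW$: the horizontal Koszul identity (from (a) and (b)) and the vertical identity $2g(\oversetc{\b}_VY,Z)=Vg(Y,Z)+g(h[V,Y],Z)-g(h[V,Z],Y)$ (symmetric part from (a), antisymmetric part from (c)) pin the connection down, and by uniqueness your formulas necessarily reproduce the paper's $\oversetc{\b}$; I have checked that both displayed identities are correct with the paper's torsion convention $\oversetc{T}(V,Y)=\oversetc{\b}_VY-h[V,Y]$. What your approach buys is that uniqueness and the construction fall out of one formula with no ansatz to guess; what it costs is the battery of verifications you rightly flag (tensoriality in $W$, Leibniz in $Y$) plus one you should state explicitly: $C^{\infty}(M)$-linearity in $Z$ of the functional $Z\mapsto 2g(\oversetc{\b}_WY,Z)$, which is what actually allows the positive-definiteness of $g$ on $HM$ to produce a genuine section $\oversetc{\b}_WY$ rather than merely a pointwise-undefined pairing (it does hold; the $(Xf)$-, $(Yf)$- and $(Vf)$-contributions cancel exactly as in the Riemannian case). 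The paper's route, by contrast, yields a connection that is manifestly well defined --- it is assembled from objects already known to be connections or tensors --- and gives the explicit expression the authors then use in the $S^3$ and $S^7$ computations.
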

\begin{proof}
Let $W \in \mathfrak{X}(M)$ and \,$Z\in \Gamma(HM)$. Set
\begin{equation}\label{SR connection}
\oversetc{\nabla}_WZ=\widehat{\nabla}_WZ -\frac{1}{2}(\mathfrak{L}_{\;\undersym{X}{i}} g)(hW,Z)\, \;\undersym{X}{i}-\frac{1}{2}g(Z,\,h[vW,\,\undersym{X}{k}])\undersym{X}{k},
\end{equation}
where $ \widehat{\nabla}_YZ:=\frac{1}{2}(\nabla_YZ+\nabla_{Z}hY+h[Y,Z])$.

It is clear that, $\oversetc{\nabla}$ is a linear connection with the desired properties. For example, let us prove the property (c):
\begin{eqnarray*}\oversetc{T}(V,\;\undersym{X}{k})&=&\oversetc{\b}_{V}\;\undersym{X}{k}-h[V,\;\undersym{X}{k}]=
\widehat{\nabla}_{V} \;\undersym{X}{k}-\frac{1}{2}g(\;\undersym{X}{k},\,h[V,\,\undersym{X}{i}])\undersym{X}{i}-h[V,\;\undersym{X}{k}] \\
&=&\frac{1}{2}\Big\{\nabla_{V} \;\undersym{X}{k}+\nabla_{\;\undersym{X}{k}} hV+h[V,\;\undersym{X}{k}]-
g(\;\undersym{X}{k},\,h[V,\,\undersym{X}{i}])\undersym{X}{i} -2h[V,\;\undersym{X}{k}]\Big\} \\
&=&-\frac{1}{2}\Big\{g(\;\undersym{X}{k},\,h[V,\,\undersym{X}{i}])\undersym{X}{i} +h[V,\;\undersym{X}{k}]\Big\}.\end{eqnarray*}
Hence,
\begin{eqnarray*}
  g(\oversetc{T}(V,\;\undersym{X}{k}),\;\undersym{X}{j}) &=&- \frac{1}{2}\Big\{g(\;\undersym{X}{k},\,h[V,\,\undersym{X}{i}])g(\undersym{X}{i},\;\undersym{X}{j}) +g(h[V,\;\undersym{X}{k}],\;\undersym{X}{j})\Big\}\\
  &=&-\frac{1}{2}\Big\{g(\;\undersym{X}{k},\,h[V,\,\undersym{X}{j}]) +g(h[V,\;\undersym{X}{k}],\;\undersym{X}{j})\Big\} \\
  &=& g(\oversetc{T}(V,\;\undersym{X}{j}),\;\undersym{X}{k}).
\end{eqnarray*}

\par For the uniqueness, assume that $\overline{\nabla}$ is another linear connection satisfying (a), (b) and (c).
Define the tensor $\psi(W,\,Z):= \oversetc{\nabla}_WZ -\overline{\nabla}_WZ$. Then for $Y,\,Z \in \Gamma(HM),\,V \in \Gamma(VM)$,
\begin{eqnarray*}
  g(\psi(V,\,Z),\,Y)&\overset{\;\text{(a)}}{=}& -g(Z,\,\psi(V,\,Y))=-g(Z,\,\oversetc{T}(V,\,Y)-\overline{T}(V,\,Y)) \\
  &\overset{\;\text{(c)}}{=}& -g(Y,\,\oversetc{T}(V,\,Z)-\overline{T}(V,\,Z))\\
   &=& -g(Y,\,\psi(V,\,Z)).
\end{eqnarray*}
On the other hand, for $X,\,Y,\,Z \in \Gamma(HM),$
\begin{eqnarray*}
  g(\psi(X,\,Z),\,Y)&\overset{\;\text{(a)}}{=}& -g(Z,\,\psi(X,\,Y))\overset{\;\text{(b)}}{=}-g(Z,\,(Y,\,X))
  \overset{\;\text{(a)}}{=} g(X,\,\psi(Y,\,Z))\\
  &\overset{\;\text{(b)}}{=}& g(X,\,\psi(Z,\,Y))\overset{\;\text{(a)}}{=}-g(Y,\,\psi(Z,\,X))
  \overset{\;\text{(b)}}{=}-g(Y,\,\psi(X,\,Z)).\end{eqnarray*}
Hence, $\psi(W,\,Z):= 0\,\,\, \forall \,W \in \mathfrak{X}(M),\,Z\in \Gamma(HM)$, which completes the proof.
\end{proof}

 \begin{remark}
 {\em
\textbf{(a)} The non vanishing counterparts of the torsion tensor $\oversetc{T}$ of\, $\oversetc{\nabla}$ are given, for all $V,U \in \Gamma(VM)$, by
   \begin{eqnarray*}
    \oversetc{T}(\;\undersym{X}{k},\,V)&=& \frac{1}{2}\Big\{T(\;\undersym{X}{k},\,V)- g(h[\;\undersym{X}{k},\,V],\;\undersym{X}{i})\;\undersym{X}{i}\Big\},\\
    \oversetc{T}(V,\,U)&=&-h[V,\,U].
   \end{eqnarray*}
   On the other hand, $\oversetc{T}$ vanishes on the horizontal distribution. The connection $\oversetc{\nabla}$ is thus a generalization of the Levi-Civita connection to the sub-Riemannian case. The advantage of formula (\ref{SR connection}) is that it gives the connection $\oversetc{\b}$ an explicit form, contrary to the Levi-Civita connection.\vspace{4pt}\\
 \textbf{(b)} If, in particular, $M$ is parallelizable ($k=n$), the sR-connection is just the well known Levi-Civita connection of the parallelizable manifold $M$ \cite{Waleed, AMR}.
 }
 \end{remark}
 \end{mysect}


\begin{mysect}{The sphere $S^3$}
Let $M= S^3$, the 3-sphere, and let $(y_0,y_1,y_2,y_3)$ be the coordinates on $S^3$. Consider the parallelization vector fields on $S^3$ given by \cite{parallelizable spheres}:
\begin{eqnarray*}
  \undersym{X}{1} &=& -y_{2}\,\partial_{y_{0}}+y_{3}\,\partial_{y_{1}}+y_{0}\,\partial_{y_{2}}-y_{1}\,\partial_{y_{3}}, \\
  \undersym{X}{2} &=& -y_{3}\,\partial_{y_{0}}-y_{2}\,\partial_{y_{1}}+y_{1}\,\partial_{y_{2}}+y_{0}\,\partial_{y_{3}}, \\
  \undersym{X}{3} &=& -y_{1}\,\partial_{y_{0}}+y_{0}\,\partial_{y_{1}}-y_{3}\,\partial_{y_{2}}+y_{2}\,\partial_{y_{3}}=
(1/2)[\;\undersym{X}{1},\;\undersym{X}{2}].
\end{eqnarray*}
Let $HM= \text{span}\{\undersym{X}{1},\;\undersym{X}{2}\}$ and $VM= \text{span}\{\undersym{X}{3}\}$.
The distribution $HM$ is non-integrable and bracket generating of step 2. The parallelization forms associated with $\;\undersym{X}{1},\,\;\undersym{X}{2}$ are given by:
$$\;\undersym{\Omega}{1}=-y_{2}\,dy_{0}+y_{3}\,dy_{1}+y_{0}\,dy_{2}-y_{1}\,dy_{3},$$
$$\;\undersym{\Omega}{2}=-y_{3}\,dy_{0}-y_{2}\,dy_{1}+y_{1}\,dy_{2}+y_{0}\,dy_{3}.$$
The sub-Riemannian metric of $HM$, defined by (\ref{metric}), is given by
\begin{eqnarray*}
  g &=& (y_{2}^{2}+ y_{3}^{2})\,({dy_{0}}^{2} +{dy_{1}}^{2}) +
   (y_{1}^{2}+ y_{0}^{2})\,({dy_{2}}^{2} +{dy_{3}}^{2}) \\
   & & +2(y_{0}\,y_{3}-y_{1}\,y_{2})(dy_{0}\,dy_{3}-dy_{1}\,dy_{2} )-  2(y_{1}\,y_{3}-y_{0}\,y_{2})(dy_{0}\,dy_{2}-dy_{1}\,dy_{3} ).
\end{eqnarray*}
We have $g(HM,HM)=\langle HM,HM\rangle$ and  $\langle HM,VM\rangle=0$,  where $\langle.\,,\,.\rangle$ is the usual inner product of\, $\mathbb{R}^4$.
\begin{itemize}
  \item  The Weitzenb\"ock connection $\nabla$ defined by (\ref{canonical}) has coefficients
      $$\nabla_{Y} \;\undersym{X}{1} = \nabla_{Y} \;\undersym{X}{2} = 0, \text{ where } \,\, Y=\undersym{X}{1},\, \undersym{X}{2} \text{ or } \undersym{X}{3}.$$
\end{itemize}
The torsion tensor $T$ of $\nabla$ is given by
$$T(\;\undersym{X}{1},\;\undersym{X}{2})=0, \,\,\,T(\;\undersym{X}{1},\,\undersym{X}{3})=-2\;\undersym{X}{2},\,\,\,T(\;\undersym{X}{2},\,\undersym{X}{3})=2\;\undersym{X}{1}$$
and the curvature tensor of $\nabla$ vanish identically.

 \begin{itemize}
  \item The sR-connection defined by (\ref{SR connection}) has  coefficients
\end{itemize}
$$\oversetc{\nabla}_{\;\undersym{X}{i}} \;\undersym{X}{j}=0 \,\, (i,j\in \{1,2\}),\,\,\,\,\oversetc{\nabla}_{\undersym{X}{3}} \;\undersym{X}{1}=-3\;\undersym{X}{2},\,\,\,\,\,\,\oversetc{\nabla}_{\undersym{X}{3}} \;\undersym{X}{2}=3\;\undersym{X}{1}.$$
The non vanishing components of the torsion tensor $\oversetc{T}$ of $\oversetc{\nabla}$ are
$$\oversetc{T}(\;\undersym{X}{1},\,\undersym{X}{3})= \;\undersym{X}{2},\,\,\,\, \oversetc{T}(\;\undersym{X}{2},\,\undersym{X}{3})=-\;\undersym{X}{1}$$
and the non vanishing components of the curvature tensor $\oversetc{R}$ of $\oversetc{\nabla}$ are
$$\oversetc{R}(\;\undersym{X}{1},\;\undersym{X}{2})\;\undersym{X}{1}=6\;\undersym{X}{2},\,\,\,\,\,\oversetc{R}(\;\undersym{X}{1},\;\undersym{X}{2})\;\undersym{X}{2} =-6\;\undersym{X}{1}.$$

\vspace{0pt}
\begin{remark}{\em
It should be noted that $\undersym{X}{2} =\frac{1}{2}[\,\undersym{X}{1}, \undersym{X}{3}],\,\,\undersym{X}{1}=\frac{1}{2}[\,\undersym{X}{2}, \undersym{X}{3}]$. This implies that the distributions span$\{\undersym{X}{1}, \undersym{X}{3}\} \text{ and span}\{\undersym{X}{2}, \undersym{X}{3}\}$ are also bracket generating of step 2 and we can perform the same calculation as above for each of them. Consequently, there are exactly three distinct sub-Riemannian parallelizable structures on $S^3$.
}
\end{remark}
\end{mysect}


\begin{mysect}{The sphere $S^7$}

Consider the 7-sphere $S^7$. For each point $p\in S^7$, the vector fields $Y_1,\cdots,Y_7$ form an orthonormal frame of $T_pS^7$, where \cite{parallelizable spheres}

$$
\undersym{X}{1}=-y_2\partial_{y_0}+y_3\partial_{y_1}+y_0\partial_{y_2}-
y_1\partial_{y_3}-y_6\partial_{y_4}+y_7\partial_{y_5}+
y_4\partial_{y_6}-y_5\partial_{y_7}
$$ $$
\undersym{X}{2}=-y_3\partial_{y_0}-y_2\partial_{y_1}+y_1\partial_{y_2}+
y_0\partial_{y_3}+y_7\partial_{y_4}+y_6\partial_{y_5}-
y_5\partial_{y_6}-y_4\partial_{y_7}
$$ $$
\undersym{X}{3}=-y_4\partial_{y_0}+y_5\partial_{y_1}+y_6\partial_{y_2}-
y_7\partial_{y_3}+y_0\partial_{y_4}-y_1\partial_{y_5}-
y_2\partial_{y_6}+y_3\partial_{y_7}
$$ $$
\undersym{X}{4}=-y_5\partial_{y_0}-y_4\partial_{y_1}-y_7\partial_{y_2}-
y_6\partial_{y_3}+y_1\partial_{y_4}+y_0\partial_{y_5}+
y_3\partial_{y_6}+y_2\partial_{y_7}
$$ $$
\undersym{X}{5}=-y_6\partial_{y_0}+y_7\partial_{y_1}-y_4\partial_{y_2}+
y_5\partial_{y_3}+y_2\partial_{y_4}-y_3\partial_{y_5}+
y_0\partial_{y_6}-y_1\partial_{y_7}
$$ $$
\undersym{X}{6}=-y_7\partial_{y_0}-y_6\partial_{y_1}+y_5\partial_{y_2}+
y_4\partial_{y_3}-y_3\partial_{y_4}-y_2\partial_{y_5}+
y_1\partial_{y_6}+y_0\partial_{y_7}
$$ $$
\undersym{X}{7}=-y_1\partial_{y_0}+y_0\partial_{y_1}-y_3\partial_{y_2}+
y_2\partial_{y_3}-y_5\partial_{y_4}+y_4\partial_{y_5}-
y_7\partial_{y_6}+y_6\partial_{y_7}$$
Let $HM= \text{Span}\{\undersym{X}{1}, ...,\undersym{X}{6}\}$ and $VM= \text{Span}\{\undersym{X}{7}\}$. Clearly, the distribution $HM$ is non-integrable and bracket generating of step 2. The parallelization forms associated with $\{\undersym{X}{1}, ...,\undersym{X}{6}\}$ are given by
$$
\;\undersym{\Omega}{1}=-y_2\, d{y_0}+y_3 \,d{y_1}+y_0\, d{y_2}-
y_1\, d{y_3}-y_6\, d{y_4}+y_7\, d{y_5}+
y_4 \,d{y_6}-y_5\, d{y_7}
$$ $$
\;\undersym{\Omega}{2}=-y_3\, d{y_0}-y_2\, d{y_1}+y_1\, d{y_2}+
y_0\, d{y_3}+y_7\, d{y_4}+y_6\, d{y_5}-
y_5\, d{y_6}-y_4\, d{y_7}
$$ $$
\;\undersym{\Omega}{3}=-y_4 \,d{y_0}+y_5\, d{y_1}+y_6 \,d{y_2}-
y_7\, d{y_3}+y_0\, d{y_4}-y_1\, d{y_5}-
y_2\, d{y_6}+y_3\, d{y_7}
$$ $$
\;\undersym{\Omega}{4}=-y_5\,d{y_0}-y_4\,d{y_1}-y_7\,d{y_2}-
y_6\,d {y_3}+y_1\, d{y_4}+y_0\, d{y_5}+
y_3\, d{y_6}+y_2 \,d{y_7}
$$ $$
\;\undersym{\Omega}{5}=-y_6\,d{y_0}+y_7\,d{y_1}-y_4\,d{y_2}+
y_5\,d{y_3}+y_2\,d{y_4}-y_3\,d{y_5}+
y_0\,d{y_6}-y_1\,d{y_7}
$$ $$
\;\undersym{\Omega}{6}=-y_7\,d{y_0}-y_6\, d{y_1}+y_5\, d{y_2}+
y_4\, d{y_3}-y_3\,d{y_4}-y_2\, d{y_5}+
y_1\, d{y_6}+y_0\, d{y_7}.
$$
The metric $g$ of $HM$, defined by (\ref{metric}), is given by 
\begin{eqnarray*}
  g &=& (1-{y_0}^{2}-{y_1}^2 )({d{y_0}}^2 + {d{y_1}}^2) + (1-{y_2}^{2}-{y_3}^{2})(d{y_2}^2+ {d{y_3}}^2)  \\
   & & +(1- {y_4}^{2}-{y_5}^{2}) (d{y_4}^2  +d{y_5}^2) +(1-{y_6}^{2}-{y_7}^{2})(d{y_6}^2 +d{y_7}^2) \\
  &  & +2(-y_1 \,y_6 + y_7\, y_0)(d{y_6}\,d{y_1}-d{y_7}\,d{y_0} )
  +2 (-y_3\, y_0 + y_2\, y_1)(d{y_3}\,d{y_0}- d{y_1}\,d{y_2}) \\
  & & +2(-y_0 \,y_5 + y_1 \,y_4)(d{y_0}\,d{y_5}- d{y_4}\,d{y_1} )
  -2(y_7\, y_1 + y_6\, y_0)(d{y_6}\,d{y_0}+d{y_7}\,d{y_1}) \\
  & & -2(y_1\, y_3 + y_2\, y_0)(d{y_3}\,d{y_1}- d{y_0}\,d{y_2})
  - 2(y_7\, y_5 + y_6\, y_4)(d{y_5}\,d{y_7}+d{y_4}\,d{y_6})\\
  & &  -2(y_7\, y_3 + y_2 \,y_6)(d{y_2}\,d{y_6}-d{y_3}\,d{y_7})+
   2(-y_3\, y_4 + y_5\, y_2)(d{y_4}\,d{y_3}-d{y_2}\,d{y_5}) \\
  & & + 2(-y_6\, y_5 + y_7 \,y_4)(d{y_6}\,d{y_5} -d{y_4}\,d{y_7})
  + 2(-y_2\, y_7 + y_3\, y_6)(d{y_2}\,d{y_7} - d{y_3}\,d{y_6})\\
  & & -2(y_3\, y_5 + y_4\, y_2)(d{y_2}\,d{y_4}+d{y_3}\,d{y_5}) -2(y_1\, y_5 + y_0\, y_4)(d{y_4}\,d{y_0}+d{y_3}\,d{y_5})
\end{eqnarray*}
We have $g(HM,\,HM)=\langle HM,\,HM\rangle, \text{  and  } \langle HM,\,VM\rangle=0,$ where $\langle.\,,\,.\rangle$ is the usual inner product on $\mathbb{R}^8$.

\begin{itemize}
  \item  The Weitzenb\"ock connection $\nabla$ defined by (\ref{canonical}) has coefficients
      $$\nabla_{\undersym{X}{i}} \;\undersym{X}{j} = 0,\,\,\text{for } 1\leq i \leq 7, \, 1\leq j \leq 6.$$
\end{itemize}
As an illustration, some components of the torsion tensor $T$ of $\nabla$ are given by
$$
T(\;\undersym{X}{1},\;\undersym{X}{7})=  2(y_3 \partial_{y_0}+y_2\partial_{y_1}-y_1\partial_{y_2}-
y_0\partial_{y_3}+y_7\partial_{y_4}+y_6\partial_{y_5}-
y_5\partial_{y_6}-y_4\partial_{y_7}),$$
$$T(\;\undersym{X}{6},\;\undersym{X}{7})=2(-y_6\partial_{y_0}+y_7\partial_{y_1}+y_4\partial_{y_2}-
y_5\partial_{y_3}-y_2\partial_{y_4}+y_3\partial_{y_5}+
y_0\partial_{y_6}-y_1\partial_{y_7})
$$
and the curvature tensor of $\nabla$ vanishes identically.

\begin{itemize}
  \item The sR-connection defined by (\ref{SR connection}) has the properties:
\end{itemize}
$$\oversetc{\nabla}_{\undersym{X}{i}} \;\undersym{X}{j} = -\oversetc{\nabla}_{\undersym{X}{j}} \;\undersym{X}{i},\,\, \,\,
\,\oversetc{\nabla}_{\undersym{X}{7}} \;\undersym{X}{i} = \frac{3}{2}\,T(\;\undersym{X}{i},\;\undersym{X}{7}),\,\,\, 1\leq i,j \leq 6.$$
For the coefficients of $\oversetc{\nabla}$, we have, for example,\begin{eqnarray*}
\oversetc{\nabla}_{\undersym{X}{1}} \;\undersym{X}{2}=-\oversetc{\nabla}_{\undersym{X}{2}} \;\undersym{X}{1} &=& 2 ( y_7^2  +y_6^2 +y_5^2 +y_4^2)(y_1 \,dy_{0} - y_0 \,dy_{1} +y_3 \,dy_{2}-y_2 \,dy_{3}) \\
              & & +2 ( y_0^2  +y_1^2 +y_2^2 +y_3^2)(y_5 \,dy_{4} - y_4 \,dy_{5} +y_7 \,dy_{6}-y_6 \,dy_{7}).
            \end{eqnarray*}
The non vanishing components of the torsion tensor $\oversetc{T}$ of $\oversetc{\nabla}$ are
$$\oversetc{T}(\;\undersym{X}{i},\;\undersym{X}{7})=-\frac{1}{2}\;T(\;\undersym{X}{i},\;\undersym{X}{7}),
\,\,\,1\leq i \leq 6.$$

It is to be noted that the other non-vanishing components of $T$ and the non-vanishing components of $\oversetc{R}$ have not been written down. They have been computed using Maple program but they are so long. The following result is also proved using Maple.
\begin{proposition}
The parallelization sections $\undersym{X}{i},\,\, i=1,\ldots,6,$ are Killing sections: ${\mathcal{L}}_{\undersym{X}{i}}g=0$.
\end{proposition}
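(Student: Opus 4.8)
The plan is to bypass the machine computation entirely by recognising that each parallelization section is the restriction to $S^7$ of a \emph{linear} vector field on the ambient $\mathbb{R}^8$ generated by a skew-symmetric matrix, hence an ordinary Killing field of the round metric. First I would observe that every section is of the form $\undersym{X}{i}|_p = A_i\,p$, where $A_i$ is the constant matrix whose entries are read off from the explicit coefficients above; the sign pattern of those coefficients (the coefficient of $\partial_{y_a}$ equals $\pm y_b$ exactly when that of $\partial_{y_b}$ equals $\mp y_a$) says precisely that $A_i^{\mathsf T}=-A_i$. Since $A_i$ is skew-symmetric, $\langle A_i\,p,\,p\rangle = 0$ for all $p$, so $\undersym{X}{i}$ is tangent to every sphere centred at the origin, in particular to $S^7$, and its flow is the one-parameter group $p\mapsto e^{tA_i}p$ of orthogonal transformations of $\mathbb{R}^8$. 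Restricted to $S^7$ these are isometries of the round metric $G$ induced by $\langle\cdot\,,\,\cdot\rangle$, whence $\mathcal{L}_{\undersym{X}{i}}G = 0$; that is, each $\undersym{X}{i}$ is a genuine Killing field of $(S^7,G)$.

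Next I would reduce the horizontal Lie derivative of the sub-Riemannian metric to this ambient statement. By the displayed relation $g(HM,HM)=\langle HM,HM\rangle$, the metric $g$ is exactly the restriction of $G$ to $HM$, and the splitting $TM=HM\oplus VM$ is $G$-orthogonal, so $G(hU,Z)=G(U,Z)$ for every $U\in\mathfrak{X}(M)$ and $Z\in\Gamma(HM)$. Expanding the definition of $\mathfrak{L}$ and using these two facts together with $[Y,W]=-[W,Y]$, for $Y,Z\in\Gamma(HM)$ one obtains
\begin{align*}
(\mathfrak{L}_{\undersym{X}{i}}g)(Y,Z)
&= \undersym{X}{i}\cdot g(Y,Z) + g(h[Y,\undersym{X}{i}],Z) + g(h[Z,\undersym{X}{i}],Y)\\
&= \undersym{X}{i}\cdot G(Y,Z) + G([Y,\undersym{X}{i}],Z) + G([Z,\undersym{X}{i}],Y)\\
&= (\mathcal{L}_{\undersym{X}{i}}G)(Y,Z),
\end{align*}
the last equality being the standard formula for the Lie derivative of a $(0,2)$-tensor. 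Thus on horizontal sections the horizontal Lie derivative of $g$ along $\undersym{X}{i}$ coincides with the ordinary Lie derivative of the round metric $G$.

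Combining the two steps, $(\mathfrak{L}_{\undersym{X}{i}}g)(Y,Z)=(\mathcal{L}_{\undersym{X}{i}}G)(Y,Z)=0$ for all $Y,Z\in\Gamma(HM)$ and all $i=1,\dots,6$, which is the assertion. The same argument shows $\undersym{X}{7}$ is $G$-Killing as well; it is left out only because it is the vertical section rather than a parallelization section of the distribution. The step I expect to carry the only real content is the middle paragraph, namely verifying that the horizontal projector $h$ in the definition of $\mathfrak{L}$ may be dropped against a horizontal argument (so that the expression becomes the Lie-derivative formula for $G$); once this identification is secured, the vanishing is immediate from the elementary fact that the $\undersym{X}{i}$ integrate to rotations of $\mathbb{R}^8$. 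This conceptual route has the added merit of explaining, with no computation, why the analogous parallelization sections on $S^3$ are Killing as well.
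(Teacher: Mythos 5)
Your proof is correct, and it takes a genuinely different route from the paper: the paper offers no argument at all beyond the statement that the result ``is also proved using Maple,'' whereas you give a conceptual proof. Your two ingredients both check out. First, each $\undersym{X}{i}$ ($i=1,\dots,6$) is indeed of the form $p\mapsto A_ip$ with $A_i$ skew-symmetric (the sign pattern you describe is verified by inspection of the displayed coefficients), so its flow $e^{tA_i}$ acts by rotations preserving $S^7$ and the induced round metric $G$, whence $\mathcal{L}_{\undersym{X}{i}}G=0$. Second, since $g(HM,HM)=\langle HM,HM\rangle$ and $\langle HM,VM\rangle=0$, for horizontal $Y,Z$ one may drop the projector $h$ against the horizontal argument and replace $g$ by $G$ term by term, and the paper's formula $(\mathfrak{L}_{W}g)(Y,Z)=W.g(Y,Z)+g(h[Y,W],Z)+g(h[Z,W],Y)$ then literally becomes the standard expression for $(\mathcal{L}_{W}G)(Y,Z)$ (the signs agree because $[Y,W]=-[W,Y]$). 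What your approach buys is substantial: it eliminates the symbolic computation, it explains uniformly why the analogous statement holds on $S^3$ and for any sub-distribution spanned by a subset of the $\undersym{X}{i}$, and it makes the mechanism (infinitesimal rotations plus $G$-orthogonality of the splitting) visible. One caution worth recording: your identification works only when both arguments are horizontal, which is exactly the domain of the paper's horizontal Lie derivative $\mathfrak{L}$; if one instead read the proposition as the ordinary Lie derivative of the degenerate tensor $\undersym{\Omega}{i}\otimes\undersym{\Omega}{i}$ on all of $TM$, the mixed components $(\mathcal{L}_{\undersym{X}{i}}g)(Y,\undersym{X}{7})=-g(Y,h[\undersym{X}{i},\undersym{X}{7}])$ would not vanish, since $h[\undersym{X}{i},\undersym{X}{7}]\neq0$. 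So your reading (the horizontal Lie derivative on $\Gamma(HM)\times\Gamma(HM)$) is the only one under which the proposition is true, and under that reading your proof is complete.
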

The next table gives the different parallelizable distributions (PD) defined on $S^7$, where we use the notation: $X_{kl}:=[{\undersym{X}{k}}, {\undersym{X}{l}}]$.

\begin{center}
\small{\begin{tabular} {|c|c|c|c|c|}\hline
\multirow{2}{*}{$HM$ }&\multirow{2}{*}{ Rank}&\multirow{2}{*}{Independent commutators} &\multirow{2}{*}{ HM is } &\multirow{2}{*}{Step}
\\[0.2 cm]spanned by & of HM&&bracket generating&
\\[0.2cm]
\hline
\multirow{2}{*}{$\undersym{X}{1}, ...,\undersym{X}{6}$}&\multirow{2}{*}{6}&\multirow{2}{*}{$X_{12}, X_{13},X_{14},X_{15},X_{16},X_{23},X_{24}, X_{25}, $}&\multirow{2}{*}{Yes}&\multirow{2}{*}{2}\\[0.3cm]&&$X_{26},X_{34},X_{35},X_{36},X_{45},X_{46},X_{56}$&&  \\[0.2cm]
\hline
\multirow{2}{*}{$\undersym{X}{1}, ...,\undersym{X}{5}$}&\multirow{2}{*}{5}&\multirow{2}{*}{$(X_{12}, X_{13}),(X_{12}, X_{23}),(X_{12}, X_{14}),$}& \multirow{2}{*}{Yes}&\multirow{2}{*}{2}\\[0.3 cm] &&$(X_{12}, X_{24}),(X_{12}, X_{34}),(X_{12}, X_{15}),$ &&
\\[0.1 cm]&&$(X_{12}, X_{25}),(X_{12}, X_{35}),(X_{12}, X_{45})$&&
\\[0.2 cm]\hline
\multirow{2}{*}{$\undersym{X}{1}, ...,\undersym{X}{4}$}&\multirow{2}{*}{4}&\multirow{2}{*}{$(X_{12}, X_{13}, X_{23}),(X_{12}, X_{13}, X_{14}),$}& \multirow{2}{*}{Yes}&\multirow{2}{*}{2}\\[0.3 cm]&&$(X_{12}, X_{13}, X_{34}),(X_{12}, X_{13}, X_{24}) $&&
\\[0.2 cm]\hline
\multirow{2}{*}{$\undersym{X}{1}, \undersym{X}{2},\undersym{X}{3}$}&\multirow{2}{*}{3}&\multirow{2}{*}{$(X_{12}, X_{13}, X_{23})$}& \multirow{2}{*}{No}&\multirow{2}{*}{infinite}
\\[0.3 cm]\hline
\multirow{2}{*}{$\undersym{X}{1},\undersym{X}{2}$}&\multirow{2}{*}{2}&\multirow{2}{*}{$X_{12}$}& \multirow{2}{*}{No}&\multirow{2}{*}{infinite}
\\[0.3 cm]\hline
\end{tabular}}
\end{center}

\vspace{5pt}
The above table provides some sort of classification of sub-Riemannian parallelizable distributions on $S^7$.
We have 7 PD's of rank 6 (spanned by different choices of 6 sections from the 7 ones $\undersym{X}{1}, ...,\undersym{X}{7}$). Similarly, there are 21 PD's of rank 5, 35 PD's of rank 4, 35 PD's of rank 3 and 21 PD's of rank 2.

For example, the second row of this table concerns with the PD of rank 6 spanned by $\undersym{X}{1}, ...,\undersym{X}{6}$. It is non-integrable and bracket generating (BG) of step 2. The third column (intersecting the second row) gives the commutators independent with  $\undersym{X}{1}, ...,\undersym{X}{6}$. That is,  $\undersym{X}{1}, ...,\undersym{X}{6}$ together with $X_{12}$ are independent and $\undersym{X}{1}, ...,\undersym{X}{6}$ together with $X_{13}$ are independent, ..., etc. It should be noted that besides the above mentioned 7 PD's of rank 6, there are many other PD's of rank 6: we may take $\{\undersym{X}{1}, ...,\undersym{X}{5}, X_{12}\}$, $\{\undersym{X}{1}, ...,\undersym{X}{5}, X_{13}\}$, $\{\undersym{X}{1}, ...,\undersym{X}{4}, X_{12}, X_{13}\}$, ..., etc.

The same discussion can be made for the other rows of the table where we consider PD's of rank 5, 4, 3, 2 on $S^7$. This gives many non-integrable PD's which are either BG or non BG.
\end{mysect}


\bibliographystyle{plain}

\end{document}